\newcommand{\Sym}{\mathsf{Sym}}
\newcommand{\HWV}{\mathsf{HWV}}
\newcommand{\la}{\lambda}
\newcommand{\IC}{\ensuremath{\mathbb{C}}}
\newcommand{\IN}{\ensuremath{\mathbb{N}}}
\newcommand{\aS}{\ensuremath{\mathfrak{S}}}
\DeclareMathOperator{\diag}{diag}
\newcommand{\Specht}[1]{[{#1}]}
\newcommand{\tensor}{\smash{\textstyle\bigotimes}}
\newcommand{\GL}{\mathsf{GL}}
\DeclareMathOperator{\sdet}{eval}
\newcommand{\transpose}[1]{{{^t\!}{#1}}}
\numberwithin{equation}{section}
\newtheorem{theorem}[equation]{Theorem}
\newtheorem{corollary}[equation]{Corollary}
\newtheorem{lemma}[equation]{Lemma}
\newcommand{\fref}[1]{\ref{#1}}
\newcommand{\partinto}[1][]{\smash{\mathord{\mathchoice{%
  \xymatrix@=0.4em@1{%
  \ar@{|-}[rr]_-*--{\scriptstyle #1}
  &*{\phantom{\scriptstyle{#1}}}&}
}{
  \xymatrix@=0.25em@1{%
  \ar@{|-}[rr]_-*--{\scriptstyle #1}
  &*{\phantom{\scriptstyle{#1}}}&}
}{
  \xymatrix@=0.2em@1{%
  \ar@{|-}[rr]_-*--{\scriptscriptstyle #1}
  &*{\phantom{\scriptscriptstyle{#1}}}&}
}{}}}}
\newcommand{\partintonosmash}[1][]{\mathord{\mathchoice{%
  \xymatrix@=0.4em@1{%
  \ar@{|-}[rr]_-*--{\scriptstyle #1}
  &*{\phantom{\scriptstyle{#1}}}&}
}{
  \xymatrix@=0.25em@1{%
  \ar@{|-}[rr]_-*--{\scriptstyle #1}
  &*{\phantom{\scriptstyle{#1}}}&}
}{
  \xymatrix@=0.2em@1{%
  \ar@{|-}[rr]_-*--{\scriptscriptstyle #1}
  &*{\phantom{\scriptscriptstyle{#1}}}&}
}{}}}
\newcommand{\partintostar}[1][]{\smash{\mathord{\mathchoice{%
  \xymatrix@=0.4em@1{%
  \ar@{|-}[rr]_-*--{\scriptstyle #1}^-*--{\scriptstyle \ast}
  &*{\phantom{\scriptstyle{#1}}}&}
}{
  \xymatrix@=0.25em@1{%
  \ar@{|-}[rr]_-*--{\scriptstyle #1}^-*--{\scriptstyle \ast}
  &*{\phantom{\scriptstyle{#1}}}&}
}{
  \xymatrix@=0.2em@1{%
  \ar@{|-}[rr]_-*--{\scriptscriptstyle #1}^-*--{\scriptstyle \ast}
  &*{\phantom{\scriptscriptstyle{#1}}}&}
}{}}}}
\newcommand{\partintostarnosmash}[1][]{\mathord{\mathchoice{%
  \xymatrix@=0.4em@1{%
  \ar@{|-}[rr]_-*--{\scriptstyle #1}^-*--{\scriptstyle \ast}
  &*{\phantom{\scriptstyle{#1}}}&}
}{
  \xymatrix@=0.25em@1{%
  \ar@{|-}[rr]_-*--{\scriptstyle #1}^-*--{\scriptstyle \ast}
  &*{\phantom{\scriptstyle{#1}}}&}
}{
  \xymatrix@=0.2em@1{%
  \ar@{|-}[rr]_-*--{\scriptscriptstyle #1}^-*--{\scriptstyle \ast}
  &*{\phantom{\scriptscriptstyle{#1}}}&}
}{}}}
\newcommand{\kron}[3]{k_{{#1}{#2}{#3}}}
\newcommand{\kronk}[3]{k_{{#1},{#2},{#3}}}
\theoremstyle{definition}
\newtheorem{definition}[equation]{Definition}
\newcommand\Zehn{10}
\title{The Saxl Conjecture and the Dominance Order}
\author{Christian Ikenmeyer\thanks{Texas A\&M University, ciken$@$math.tamu.edu}}
\date{2015-May-05}
\begin{document}
\sloppy

\maketitle
\begin{abstract}
In 2012 Jan Saxl conjectured that all irreducible representations of the symmetric group occur in the decomposition of
the tensor square of the irreducible representation corresponding to the staircase partition.
We make progress on this conjecture by proving the occurrence of all those irreducibles which correspond to partitions
that are comparable to the staircase partition in the dominance order.
Moreover, we use our result to show the occurrence of all irreducibles corresponding to hook partitions.
This generalizes results by Pak, Panova, and Vallejo from 2013.
\end{abstract}

\bigskip

{\small
\noindent\textbf{Keywords:} Kronecker coefficients, symmetric group, irreducible representations, tensor square conjecture, Saxl conjecture

\medskip

\noindent\textbf{2010 Mathematics Subject Classification:} 20C30; 20G05.
}

\section{Introduction}
In their recent work \cite{ppv:13} (see also \cite{ppv:14}) Pak, Panova, and Vallejo study the tensor square conjecture for symmetric groups and the related Saxl conjecture.
The tensor square conjecture states that for all natural numbers $d$ besides 2, 4 and 9
there exists an irreducible representation $\Specht\lambda$ of the symmetric group $\aS_d$ on
$d$ letters such that every irreducible representation of $\aS_d$ is a constituent of the tensor square of $\Specht\lambda$.
Jan Saxl conjectured in 2012 that in the case of $d$ being a triangle number the isomorphy type $\lambda$ can be chosen to be the staircase partition~$\varrho$.
We make progress on the Saxl conjecture by showing that all those partitions
which are comparable to the staircase partition in the dominance order
actually appear in the decomposition of the tensor square of $\Specht\varrho$, see Theorem~\ref{thm:main} below.
As a corollary, we also proof that all hook partitions appear in the decomposition of the tensor square of $\Specht\varrho$, see Corollary~\ref{cor:saxlhooks},
which can also be derived from \cite[Thm.~4.12]{ppv:13} using the same ideas that we use in Section~\fref{sec:hooks}.
Besides hooks, the recent paper \cite{ppv:13} contains partial results about two-row partitions, certain three-row-partitions, and the case of a two-row partition with an additional column.
Our work generalizes the first two of these three cases.
Other work on the Saxl conjecture can be found in \cite{Val:14}.

The aforementioned conjectures are questions about the positivity of certain Kronecker coefficients.
Recently the study of these coefficients has intensified, as they arise prominently in geometric complexity theory (see e.g.~\cite{gct1,gct2,BI:10,BI:13,BLMW:11,ike:12b,pp:14} to name a few) and in quantum information theory (see e.g.~\cite{chm:05,Chr:06,cm:06,Kly:06,chm:07,CDW:12} and references therein).

Our proof of Theorem~\ref{thm:main} uses the interpretation of the Kronecker coefficient as the dimension of the space of homogeneous highest weight polynomials on the triple tensor product space.
Using polarization, a standard method from multilinear algebra, we show that these polynomials do not vanish.

\subsection*{Acknowledgments}
I am grateful to the Simons Institute for the Theory of Computing in Berkeley for hosting me during this research.
I want to thank Cameron Farnsworth, Igor Pak, Greta Panova, and Ernesto Vallejo for helpful discussions.
I also want to thank an anonymous referee for valuable comments.

\section{Preliminaries}
A \emph{partition} $\la$ is defined to be a finite sequence of nonincreasing nonnegative integers $\la = (\la_1, \la_2, \ldots, \la_n)$.
A pictorial description of partitions are \emph{Young diagrams},
which are upper-left-justified arrays having $\la_i$ boxes in the $i$th row, for example the partition $(5,3,1,1)$ can be depicted as follows:
\[\young(\ \ \ \ \ ,\ \ \ ,\ ,\ )\]
The \emph{transposed Young diagram} of $\la$ is obtained by flipping the Young diagram of $\la$ at the main diagonal.
The corresponding partition is denoted $\transpose\la$. For example $\transpose(5,3,1,1)=(4,2,2,1,1)$.
The length of the $i$th row of $\la$ is given by $\la_i$ and
the length of the $i$th column of $\la$ is given by $\transpose\la_i$.
We call $|\la|:=\sum_{i=1}^n \la_i$ the \emph{number of boxes of $\la$}.
If the number of boxes of $\la$ is $d$, then we say that $\la$ is a partition of~$d$.
A partition $\la$ \emph{dominates} another partition $\varrho$ if for all $k$ we have $\sum_{i=1}^k \la_i \geq \sum_{i=1}^k \varrho_i$.
If $\la$ dominates $\varrho$ or $\varrho$ dominates $\la$, we say that \emph{$\la$ and $\varrho$ are comparable in the dominance order}.
Let $\varrho(n):=(n,n-1,n-2,\ldots,1)$ denote the so-called \emph{staircase partition} with $d := {n(n+1)}/2$ boxes.

Our base field are the complex numbers.
Every partition $\la$ with $d$ boxes corresponds to an isomorphy type $\Specht\la$ of irreducible representations of the symmetric group $\aS_d$.
For two partitions $\la$ and $\mu$ of $d$ we have that
the group $\aS_d$ also acts naturally on the tensor product $\Specht\la \otimes \Specht\mu$ by diagonally embedding $\aS_d \hookrightarrow \aS_d \times \aS_d$, $\pi \mapsto (\pi,\pi)$
and this tensor product decomposes into irreducibles.
Our main result is the following theorem.
\begin{theorem}\label{thm:main}
For every partition $\nu$ with ${n(n+1)}/2$ boxes that is comparable in the dominance order to the staircase partition $\varrho(n)$ we have that
the tensor square representation $\Specht{\varrho(n)} \otimes \Specht{\varrho(n)}$ contains the irreducible representation $\Specht\nu$ as an irreducible constituent.
\end{theorem}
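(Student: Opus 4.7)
The plan splits into a symmetry reduction and an explicit construction of a nonzero highest weight polynomial.

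First I would reduce to the case $\nu \triangleright \varrho$. Since tensoring by the sign representation interchanges $\Specht{\mu}$ with $\Specht{\transpose\mu}$, the standard identity $\kronk{\lambda}{\mu}{\nu} = \kronk{\lambda}{\transpose\mu}{\transpose\nu}$ holds; specialising to $\mu = \varrho$ and using the self-conjugacy $\transpose{\varrho} = \varrho$ yields $\kronk{\varrho}{\varrho}{\nu} = \kronk{\varrho}{\varrho}{\transpose\nu}$. As $\nu \triangleleft \varrho$ is equivalent to $\transpose\nu \triangleright \transpose\varrho = \varrho$, it therefore suffices to treat $\nu \triangleright \varrho$.

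Next I would invoke the interpretation of $\kronk{\lambda}{\mu}{\nu}$ as $\dim \HWV_{(\lambda,\mu,\nu)}(\Sym^d(A \otimes B \otimes C))$ for complex vector spaces $A, B, C$ of sufficiently large dimension, and exhibit one nonzero such HWV of weight $(\varrho,\varrho,\nu)$. The seed is the classical staircase polynomial on $A \otimes B \cong \mathrm{Mat}_n$,
\[
  p(X) \;=\; \det_1(X)\,\det_2(X)\cdots\det_n(X),
\]
where $\det_k(X)$ denotes the upper-left $k \times k$ principal minor; it is a HWV of weight $(\varrho,\varrho)$ and total degree $d = n(n+1)/2$. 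Let $\tilde p$ denote its polarization to a symmetric $d$-linear form on $(A \otimes B)^d$. For $T \in A \otimes B \otimes C$, write $T = \sum_k X_k \otimes f_k$ in terms of a chosen basis $(f_k)$ of $C$, and consider
\[
  q_\nu(T) \;=\; \tilde p\bigl(\underbrace{X_1,\ldots,X_1}_{\nu_1},\,\underbrace{X_2,\ldots,X_2}_{\nu_2},\,\ldots\bigr),
\]
a polynomial of degree $d$, of weight $(\varrho,\varrho)$ on $A \otimes B$ and of torus weight $\nu$ on $C$. Applying the column Young antisymmetrizer of shape $\nu$ to the argument slots of $\tilde p$ then upgrades $q_\nu$ to a bona fide HWV $f_\nu$ of weight $(\varrho,\varrho,\nu)$.

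The crux, and the step I expect to be the main obstacle, is to show that $f_\nu \not\equiv 0$. My strategy would be to evaluate $f_\nu$ at a carefully chosen tensor $T^\star$ whose fibers $X_k^\star$ are $(0,1)$-matrices with supports dictated by an assignment of the $d$ boxes of the staircase $\varrho$ to the rows of $\nu$. The dominance hypothesis $\nu \triangleright \varrho$ is precisely the combinatorial content of the Gale--Ryser theorem needed to produce such a compatible $(0,1)$-matrix with row sums $\nu$ and column sums $\varrho$. With $T^\star$ engineered from this assignment, I would aim to show that the expansion of $f_\nu(T^\star)$ contains a unique surviving monomial after the column antisymmetrization, yielding the required non-vanishing. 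The careful bookkeeping of surviving and cancelling monomials — and verifying that the dominance hypothesis is sharp enough to leave exactly one contribution standing — is where I expect the bulk of the work to lie.
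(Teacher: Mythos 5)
Your overall framework is the same as the paper's: the reduction via $\kronk{\lambda}{\mu}{\nu}=\kronk{\lambda}{\transpose\mu}{\transpose\nu}$ and self-conjugacy of $\varrho$, the reformulation via $\dim\HWV_{(\lambda,\mu,\nu)}(\Sym^d(A\otimes B\otimes C))$, the use of the polarized staircase polynomial as the $(\varrho,\varrho)$-part of the highest weight vector, and the appeal to Gale--Ryser to extract a combinatorial object from the hypothesis $\nu\triangleright\varrho$ all reproduce the paper's steps (the paper encodes the staircase polynomial and the $\nu$-column-antisymmetrizer together as a single ``Young hypergraph'' on the triangular point set $\Delta_n$, but that is presentational).

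The genuine gap is the non-vanishing step, which you correctly identify as ``the crux'' and ``where the bulk of the work lies'' but do not actually carry out — and your proposed strategy for it does not match the mechanism that makes the paper's proof go through. Two concrete issues. First, a transposition slip: a $(0,1)$-matrix with row sums $\nu$ and column sums $\varrho$ exists, by Gale--Ryser, iff $\transpose\nu\triangleright\varrho$, i.e.\ (using $\transpose\varrho=\varrho$) iff $\nu\triangleleft\varrho$ — the opposite of the case you reduced to. The correct object is a column-strict filling of $\nu$ with content $\varrho$ (equivalently a $(0,1)$-matrix with row sums $\transpose\nu$), and this is exactly what the paper's Lemma produces. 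Second, and more substantially, the paper does not evaluate at a tensor whose $A\otimes B$-fibers are general $(0,1)$-matrices; it evaluates against the symmetrization of $\bar v=(e_1\otimes e_1\otimes c_1)^{\otimes n}\otimes\cdots\otimes(e_n\otimes e_n\otimes c_n)^{\otimes 1}$, whose $A\otimes B$-fibers are the diagonal matrices $E_{ii}$. The combinatorial Gale--Ryser data is instead built into the highest weight vector itself (via the third hyperedge layer on $\Delta_n$, chosen so that each hyperedge meets distinct $\beta$-values $\beta(x)=n+2-r(x)-c(x)$). With this split the non-vanishing is forced by the triangular geometry of $\Delta_n$: the single box in the bottom row must carry value $1$, which propagates row by row to show that exactly one assignment $\tau=\beta$ survives, the row and column layers give $\pm1$ determinants, and the third layer gives generically nonzero determinants of the $c_i$'s. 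Your plan places the combinatorial data in the evaluation point $T^\star$ and keeps a ``standard'' HWV, which is a different organization; you would still need an argument isolating a unique surviving term, and with generic $(0,1)$-matrices as fibers the principal minors $\det_j$ can vanish (e.g.\ the full staircase $(0,1)$-matrix already has $\det_2=0$), so it is not clear your $T^\star$ leads to a clean cancellation analysis. In short: framework correct, key lemma correctly identified, but the decisive non-vanishing mechanism is missing.
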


Let $\varrho:=\varrho(n)$.
The number of occurrences of $\Specht\nu$ in $\Specht{\varrho} \otimes \Specht{\varrho}$ is called the \emph{Kronecker coefficient} $\kron {\varrho}{\varrho}{\nu}$.
Hence Theorem~\ref{thm:main} states that $\kron {\varrho}{\varrho}{\nu}$ is nonzero for all partitions $\nu$ of $n(n+1)/2$ that are comparable to $\varrho$ in the dominance order.
Our proof uses a different but also well known interpretation of Kronecker coefficients which is also used in \cite{ike:12b,BI:13,HIL:13}.
We explain this description in section~\ref{sec:newinterpretationofkron}.

For proving Theorem~\ref{thm:main} without loss of generality we can assume that $\nu$ dominates $\varrho$:
Indeed, if $\varrho$ dominates $\nu$, then $\transpose\nu$ dominates $\transpose\varrho = \varrho$
and we have $\kron {\varrho}{\varrho}{\nu} = \kron {\varrho}{(\transpose\varrho)}{(\transpose\nu)} > 0$,
because it is well known that the Kronecker coefficient is invariant under transposition of any two of its parameters.
This follows almost immediately from character theory of the symmetric group (and can be found in \cite[Lemma 4.4.7]{ike:12b}).

\section{Highest Weight Vectors}\label{sec:newinterpretationofkron}
Let $\la,\mu,\nu$ be partitions of $d$ with at most $n$ rows.
The group $\GL_n^3 := \GL_n \times \GL_n \times \GL_n$ acts on the third tensor power $\tensor^3\IC^n$ via
\[
(g', g'', g''') (v' \otimes v'' \otimes v''') := (g' v') \otimes (g'' v'') \otimes (g''' v''').
\]
Since $\GL_n^3$ acts on $\tensor^3 \IC^n$, the symmetric power $\Sym^d(\tensor^3\IC^n)$ is a finite dimensional $\GL_n^3$-representation.
Indeed, $\Sym^d(\tensor^3\IC^n) \subseteq \tensor^d(\tensor^3\IC^n)$ is the subrepresentation of tensors that are invariant under permuting the $d$ tensor factors.
For $\alpha \in \IC^n$ let $\diag(\alpha_1,\ldots,\alpha_n)$ denote the diagonal matrix with $\alpha_i$ on the main diagonal, i.e., an element of the maximal torus of $\GL_n^3$ (for the standard basis).
Given a partition triple $(\la,\mu,\nu)$ with $d$ boxes each,
a vector $f \in \Sym^d(\tensor^3\IC^n)$ is called a \emph{weight vector of type $(\la,\mu,\nu)$} if for all triples $(g',g'',g''')$ of diagonal matrices
$g'=\diag(g'_1,g'_2,\ldots,g'_n)$,
$g''=\diag(g''_1,g''_2,\ldots,g''_n)$,
$g'''=\diag(g'''_1,g'''_2,\ldots,g'''_n)$
we have
\[
(g',g'',g''') f = \prod_{i=1}^n (g'_i)^{\la_i} \prod_{i=1}^n (g''_i)^{\mu_i} \prod_{i=1}^n (g'''_i)^{\nu_i} f.
\]
Let $U_n \subseteq \GL_n^3$ denote the subgroup of triples of upper triangular matrices with 1s on the main diagonals, i.e., the maximal unipotent group of $\GL_n^3$.
A weight vector is called a \emph{highest weight vector} if $\forall g \in U_n : g f = f$.
The set of highest weight vectors of a given type $(\la,\mu,\nu)$ in $\Sym^d(\tensor^3\IC^n)$ forms a vector space which we denote by
$\HWV_{\la,\mu,\nu}(\Sym^d(\tensor^3\IC^n))$.
Using Schur-Weyl duality, a small calculation can show that the Kronecker coefficient
$\kron \nu \mu \nu$ is the dimension of $\HWV_{\la,\mu,\nu}(\Sym^d(\tensor^3\IC^n))$, see basically \cite[eq.~(14)]{chm:05}.
This is also worked out in \cite[Sec.~4.4]{ike:12b}.

The main idea is to study the $d$th tensor power instead of the $d$th symmetric power and project down to the symmetric power afterwards.
The tensor power $\tensor^d(\tensor^3 \IC^n)$ is a $\GL_n^3$-representation and for every partition triple $(\la,\mu,\nu)$ we know a generating set (even a basis)
of the highest weight vector space $\HWV_{\la,\mu,\nu}(\tensor^d(\tensor^3\IC^n))$.

We now construct an element in $\HWV_{\la,\mu,\nu}(\tensor^d(\tensor^3\IC^n))$.
Let $\la$, $\mu$, and $\nu$ have $d$ boxes each.
Let $e_1,e_2,\ldots,e_n$ denote the standard basis of $\IC^n$.
For $i \in \IN$ let
\[
\widehat i := e_1 \wedge e_2 \wedge \cdots \wedge e_i \in \tensor^i \IC^n.
\]
For $\nu$ with column lengths $\transpose\nu_1,\transpose\nu_2,\ldots$ we use this notation to define skew symmetric tensors
$\widehat{\transpose\nu_1}, \widehat{\transpose\nu_2}, \ldots$ and their tensor product
\[
\widehat \nu := \widehat{\transpose\nu_1} \otimes \widehat{\transpose\nu_2} \otimes \cdots \in \tensor^d \IC^n.
\]
It is readily checked that $\widehat\la \otimes \widehat \mu \otimes \widehat \nu \in \HWV_{\la,\mu,\nu}(\tensor^3(\tensor^d \IC^n)).$
We define $\widehat{\la,\mu,\nu} \in \HWV_{\la,\mu,\nu}(\tensor^d(\tensor^3 \IC^n))$ to be the image of $\widehat\la \otimes \widehat \mu \otimes \widehat \nu$
under the isomorphism $\HWV_{\la,\mu,\nu}(\tensor^d(\tensor^3 \IC^n)) \simeq \HWV_{\la,\mu,\nu}(\tensor^3(\tensor^d \IC^n))$,
which is just reordering the tensor factors.

We will now study a graphical interpretation of the contraction of $\widehat{\la,\mu,\nu}$ with other tensors.
For a list of vectors $v_1,\ldots,v_m \in \IC^n$, $m \leq n$, let the \emph{evaluation} $\sdet(v_1,\ldots,v_m)$ denote the determinant of the $m \times m$ matrix obtained from the column vectors $v_1,\ldots,v_m$ by taking only 
the first $m$ entries of each $v_i$.
For example
\[
\sdet\begin{pmatrix}
1 & 2 \\
0 & -1 \\
3 & 1
\end{pmatrix} = \det\begin{pmatrix}
1 & 2 \\
0 & -1
\end{pmatrix}=-1.
\]
We now construct a hypergraph $H$ with vertex set $\{1,2,\ldots,d\}$ as follows.
There are three layers of hyperedges corresponding to $\la$, $\mu$, and $\nu$, respectively.
Every vertex lies in exactly three hyperedges, one from each layer.
Every hyperedge in layer 1 corresponds to a column in $\la$, analogously for layer 2 and $\mu$ and for layer~3 and~$\nu$.
Filling a Young tableau of shape $\la$ columnwise from top to bottom gives the hyperedges of the first layer,
for example for $\la=(4,3,2,1)$ we would fill columnwise and obtain
\[
\young(158\Zehn,269,37,4),
\]
so the hyperedges of layer 1 are arranged as follows:
\[
\begin{tikzpicture}[scale=1.2]
\node at ( 1,0) {1};
\node at ( 2,0) {2};
\node at ( 3,0) {3};
\node at ( 4,0) {4};
\node at ( 5,0) {5};
\node at ( 6,0) {6};
\node at ( 7,0) {7};
\node at ( 8,0) {8};
\node at ( 9,0) {9};
\node at (10,0) {10};
\draw (0.65,0.0) .. controls (0.649999,0.224999) and (1.574999,0.45) .. (2.5,0.45) .. controls (3.425000,0.45) and (4.35,0.225000) .. (4.35,0.0) .. controls (4.35,-0.22500) and (3.425,-0.45000) .. (2.5,-0.45) .. controls (1.575000,-0.44999) and (0.650000,-0.22499) .. (0.65,0.0) -- cycle;
\draw (4.7,0.0) .. controls (4.7,0.199999) and (5.350000,0.400000) .. (6.0,0.4) .. controls (6.649999,0.399999) and (7.300000,0.199999) .. (7.3,0.0) .. controls (7.299999,-0.19999) and (6.65,-0.39999) .. (6.0,-0.4) .. controls (5.35,-0.40000) and (4.7,-0.19999) .. (4.7,0.0) -- cycle;
\draw (7.7,0.0) .. controls (7.699999,0.199999) and (8.1,0.400000) .. (8.5,0.4) .. controls (8.9,0.399999) and (9.3,0.199999) .. (9.3,0.0) .. controls (9.3,-0.19999) and (8.9,-0.39999) .. (8.5,-0.4) .. controls (8.1,-0.40000) and (7.700000,-0.19999) .. (7.7,0.0) -- cycle;
\draw (10,0) circle (0.3cm);
\end{tikzpicture}
\]
We do the same for $\mu$ and $\nu$. For example, if $\la=\mu=\varrho(4)$ and $\nu=(5,3,1,1)$ we obtain the following hypergraph:
\[
\begin{tikzpicture}[scale=1.2]
\node at ( 1,0) {1};
\node at ( 2,0) {2};
\node at ( 3,0) {3};
\node at ( 4,0) {4};
\node at ( 5,0) {5};
\node at ( 6,0) {6};
\node at ( 7,0) {7};
\node at ( 8,0) {8};
\node at ( 9,0) {9};
\node at (10,0) {10};
\draw[thick,dotted] (0.6,0.0) .. controls (0.6,0.249999) and (1.549999,0.5) .. (2.5,0.5) .. controls (3.45,0.5) and (4.400000,0.249999) .. (4.4,0.0) .. controls (4.399999,-0.24999) and (3.45,-0.5) .. (2.5,-0.5) .. controls (1.549999,-0.49999) and (0.6,-0.24999) .. (0.6,0.0) -- cycle;
\draw[thick,dotted] (4.6,0.0) .. controls (4.6,0.249999) and (5.050000,0.5) .. (5.5,0.5) .. controls (5.949999,0.5) and (6.4,0.249999) .. (6.4,0.0) .. controls (6.4,-0.24999) and (5.95,-0.5) .. (5.5,-0.5) .. controls (5.05,-0.49999) and (4.6,-0.24999) .. (4.6,0.0) -- cycle;
\draw[thick,dotted] (6.6,0.0) .. controls (6.599999,0.249999) and (7.049999,0.5) .. (7.5,0.5) .. controls (7.950000,0.5) and (8.4,0.249999) .. (8.4,0.0) .. controls (8.4,-0.24999) and (7.949999,-0.5) .. (7.5,-0.5) .. controls (7.050000,-0.49999) and (6.600000,-0.24999) .. (6.6,0.0) -- cycle;
\draw[thick,dotted] (9,0) circle (0.4cm);
\draw[thick,dotted] (10,0) circle (0.4cm);
\draw[dashed] (0.55,0.0) .. controls (0.549999,0.275) and (1.524999,0.550000) .. (2.5,0.55) .. controls (3.475000,0.549999) and (4.45,0.274999) .. (4.45,0.0) .. controls (4.45,-0.27499) and (3.475,-0.55) .. (2.5,-0.55) .. controls (1.525,-0.55) and (0.550000,-0.275) .. (0.55,0.0) -- cycle;
\draw (0.65,0.0) .. controls (0.649999,0.224999) and (1.574999,0.45) .. (2.5,0.45) .. controls (3.425000,0.45) and (4.35,0.225000) .. (4.35,0.0) .. controls (4.35,-0.22500) and (3.425,-0.45000) .. (2.5,-0.45) .. controls (1.575000,-0.44999) and (0.650000,-0.22499) .. (0.65,0.0) -- cycle;
\draw[dashed] (4.6,0.0) .. controls (4.599999,0.249999) and (5.299999,0.5) .. (6.0,0.5) .. controls (6.700000,0.5) and (7.4,0.249999) .. (7.4,0.0) .. controls (7.4,-0.24999) and (6.699999,-0.5) .. (6.0,-0.5) .. controls (5.300000,-0.49999) and (4.600000,-0.24999) .. (4.6,0.0) -- cycle;
\draw (4.7,0.0) .. controls (4.7,0.199999) and (5.350000,0.400000) .. (6.0,0.4) .. controls (6.649999,0.399999) and (7.300000,0.199999) .. (7.3,0.0) .. controls (7.299999,-0.19999) and (6.65,-0.39999) .. (6.0,-0.4) .. controls (5.35,-0.40000) and (4.7,-0.19999) .. (4.7,0.0) -- cycle;
\draw[dashed] (7.6,0.0) .. controls (7.599999,0.249999) and (8.049999,0.5) .. (8.5,0.5) .. controls (8.950000,0.5) and (9.400000,0.249999) .. (9.4,0.0) .. controls (9.399999,-0.24999) and (8.949999,-0.5) .. (8.5,-0.5) .. controls (8.050000,-0.49999) and (7.600000,-0.24999) .. (7.6,0.0) -- cycle;
\draw (7.7,0.0) .. controls (7.699999,0.199999) and (8.1,0.400000) .. (8.5,0.4) .. controls (8.9,0.399999) and (9.3,0.199999) .. (9.3,0.0) .. controls (9.3,-0.19999) and (8.9,-0.39999) .. (8.5,-0.4) .. controls (8.1,-0.40000) and (7.700000,-0.19999) .. (7.7,0.0) -- cycle;
\draw[dashed] (10,0) circle (0.35cm);
\draw (10,0) circle (0.3cm);
\end{tikzpicture}
\]
where layer 2 is drawn with dashed lines and layer 3 with dotted lines.
Let us call this hypergraph~$H$.
Let $E_i(H)$ denote the set of hyperedges in layer~$i$, $1 \leq i \leq 3$.
For a hyperedge $S$ let $S_1$ denote its smallest entry, $S_2$ denote its second smallest entry, and so on.
Let $\ell(S)$ denote the number of vertices in the hyperedge~$S$.
Let $\circ$ denote the contraction of tensors.
The main property of $\widehat{\la,\mu,\nu}$ is the following, which can be readily checked by calculation.
For all $a_i, b_i, c_i \in \IC^n$, $1 \leq i \leq n$, we have
\begin{eqnarray}\label{eq:sldetpi}
&& \widehat{\la,\mu,\nu} \circ \Big( (a_1 \otimes b_1 \otimes c_1) \otimes (a_2 \otimes b_2 \otimes c_2) \otimes \cdots \otimes (a_d \otimes b_d \otimes c_d) \Big) \nonumber\\
&=& \prod_{S \in E_1(H)} \sdet(a_{S_1}, a_{S_2}, \ldots, a_{S_{\ell(S)}} ) \nonumber \\
&\cdot& \prod_{S \in E_2(H)} \sdet(b_{S_1}, b_{S_2}, \ldots, b_{S_{\ell(S)}} )  \\
&\cdot& \prod_{S \in E_3(H)} \sdet(c_{S_1}, c_{S_2}, \ldots, c_{S_{\ell(S)}} ). \nonumber
\end{eqnarray}
Let us summarize the key properties of $H$ in the following definition.
\begin{definition}
Let $d:=|\la|=|\mu|=|\nu|$.
A \emph{Young hypergraph} $H$ of type $(\la,\mu,\nu)$ is a hypergraph with $d$ vertices such that
\begin{itemize}
\item There are three layers of hyperedges corresponding to $\nu$, $\mu$, and $\nu$, respectively.
\item Every vertex lies in exactly three hyperedges, one from each layer.
\item There is a bijection between the vertices of $H$ and the boxes in $\la$ such that
two vertices lie in a common hyperedge in layer 1 iff the corresponding boxes in $\la$ lie in the same column.
Analogously for layer 2 and $\mu$ and for layer 3 and~$\nu$. \hfill$\blacksquare$
\end{itemize}
\end{definition}
The crucial point is the following.
The fact that the actions of the groups $\GL_n^3$ and $\aS_d \times \aS_d \times \aS_d$ on $\tensor^d(\tensor^3\IC^n)$ commute implies that
we can use \emph{any Young hypergraph $H$} instead of the one we just constructed and in this manner we can define
a highest weight vector $\widehat{\la,\mu,\nu}_H$ whose tensor contraction works exactly as in equation~\eqref{eq:sldetpi}.

The group $\aS_d$ acts on $\tensor^d (\tensor^3 \IC^n)$ by rearranging the tensor factors.
To prove that $\HWV_{\la,\mu,\nu}(\Sym^d(\tensor^3 \IC^n)) \neq 0$ it is sufficient to create a Young hypergraph $H$ of type $(\la,\mu,\nu)$ such that
the projection of $\widehat{\la,\mu,\nu}_H$ to $\HWV_{\la,\mu,\nu}(\Sym^d(\tensor^3 \IC^n))$ is nonzero.
For this, it is sufficient to find a symmetric tensor $v \in \Sym^d(\tensor^3\IC^n)$ such that
$\widehat{\la,\mu,\nu}_H \circ v \neq 0$.
To prove Theorem~\ref{thm:main}
it remains to construct a Young hypergraph of type $(\varrho(n),\varrho(n),\nu)$
and a tensor $v\in \Sym^d(\tensor^3\IC^n)$ with the property $\widehat{\la,\mu,\nu}_H \circ v \neq 0$.
We will do so in the next sections.

\section{Construction of the Young Hypergraph}
For the rest of this paper we fix $n$, we let $\varrho:=\varrho(n)$, $d:=\frac{n(n+1)}2$, and we fix $\nu$ a partition of $d$ such that
$\nu$ dominates $\varrho$.
We construct a Young hypergraph $H$ of type $(\varrho,\varrho,\nu)$ with $d$ vertices as follows.
We start by defining a finite set $\Delta_n := \{(r,c) \in \IN \times \IN \mid 1 \leq r,c \leq n, \ r+c \leq n+1\}$ of $d$ points in the planar grid.
For example for $n=4$ the arrangement $\Delta_n$ can be depicted as follows:
\[
\raisebox{1cm}{$\Delta_4=$} \quad
\begin{tikzpicture}[scale=0.5]
\node at (1,-1) {$\bullet$};
\node at (2,-1) {$\bullet$};
\node at (3,-1) {$\bullet$};
\node at (4,-1) {$\bullet$};
\node at (1,-2) {$\bullet$};
\node at (2,-2) {$\bullet$};
\node at (3,-2) {$\bullet$};
\node at (1,-3) {$\bullet$};
\node at (2,-3) {$\bullet$};
\node at (1,-4) {$\bullet$};
\end{tikzpicture}
\]
The set $\Delta_n$ forms the vertex set of the Young hypergraph~$H$.
We will see that the numbering of the vertex set can be done in any way, so we omit it.
The hyperedges for the first layer of $H$ are formed by the rows and the hyperedges of the second layer are given by the columns, so for example for $n=4$ we have the following picture.
\[
\begin{tikzpicture}[scale=0.7]
\node at (1,-1) {$\bullet$};
\node at (2,-1) {$\bullet$};
\node at (3,-1) {$\bullet$};
\node at (4,-1) {$\bullet$};
\node at (1,-2) {$\bullet$};
\node at (2,-2) {$\bullet$};
\node at (3,-2) {$\bullet$};
\node at (1,-3) {$\bullet$};
\node at (2,-3) {$\bullet$};
\node at (1,-4) {$\bullet$};
\draw (0.5,-1.0) .. controls (0.499999,-0.85000) and (1.5,-0.7) .. (2.5,-0.7) .. controls (3.5,-0.7) and (4.5,-0.85000) .. (4.5,-1.0) .. controls (4.5,-1.15) and (3.5,-1.3) .. (2.5,-1.3) .. controls (1.5,-1.3) and (0.500000,-1.15) .. (0.5,-1.0) -- cycle;
\draw (0.5,-2.0) .. controls (0.5,-1.85) and (1.25,-1.69999) .. (2.0,-1.7) .. controls (2.75,-1.70000) and (3.5,-1.85000) .. (3.5,-2.0) .. controls (3.5,-2.14999) and (2.75,-2.3) .. (2.0,-2.3) .. controls (1.25,-2.3) and (0.5,-2.15) .. (0.5,-2.0) -- cycle;
\draw (0.5,-3.0) .. controls (0.5,-2.84999) and (1.0,-2.7) .. (1.5,-2.7) .. controls (2.0,-2.7) and (2.5,-2.84999) .. (2.5,-3.0) .. controls (2.5,-3.15000) and (2.0,-3.3) .. (1.5,-3.3) .. controls (0.999999,-3.3) and (0.5,-3.15000) .. (0.5,-3.0) -- cycle;
\draw (1,-4) circle (0.3cm);
\draw[dashed] (1.0,-0.5) .. controls (0.850000,-0.49999) and (0.7,-1.5) .. (0.7,-2.5) .. controls (0.7,-3.5) and (0.850000,-4.5) .. (1.0,-4.5) .. controls (1.15,-4.5) and (1.3,-3.5) .. (1.3,-2.5) .. controls (1.3,-1.5) and (1.15,-0.50000) .. (1.0,-0.5) -- cycle;
\draw[dashed] (2.0,-0.5) .. controls (1.85,-0.5) and (1.699999,-1.25) .. (1.7,-2.0) .. controls (1.700000,-2.75) and (1.850000,-3.5) .. (2.0,-3.5) .. controls (2.149999,-3.5) and (2.3,-2.75) .. (2.3,-2.0) .. controls (2.3,-1.25) and (2.15,-0.5) .. (2.0,-0.5) -- cycle;
\draw[dashed] (3.0,-0.5) .. controls (2.849999,-0.5) and (2.7,-1.0) .. (2.7,-1.5) .. controls (2.7,-2.0) and (2.849999,-2.5) .. (3.0,-2.5) .. controls (3.150000,-2.5) and (3.3,-2.0) .. (3.3,-1.5) .. controls (3.3,-0.99999) and (3.150000,-0.5) .. (3.0,-0.5) -- cycle;
\draw[dashed] (4,-1) circle (0.3cm);
\end{tikzpicture}
\]
Besides a row number $r(x) \in \{1,2,\ldots,n\}$ and a column number $c(x) \in \{1,2,\ldots,n\}$,
each vertex $x$ has a value $\beta(x) \in \{1,2,\ldots,n\}$, which we define as $\beta(x)=n+2-r(x)-c(x)$.
The $\beta$ value can be interpreted as the distance from the diagonal edge of the triangular array $\Delta_n$.
For example in the case $n=4$ the $\beta$ values are as follows:
\[
\begin{tikzpicture}[scale=0.5]
\node at (1,-1) {4};
\node at (2,-1) {3};
\node at (3,-1) {2};
\node at (4,-1) {1};
\node at (1,-2) {3};
\node at (2,-2) {2};
\node at (3,-2) {1};
\node at (1,-3) {2};
\node at (2,-3) {1};
\node at (1,-4) {1};
\end{tikzpicture}
\]
Recall that the sizes of the hyperedges of the third layer correspond to the column lengths of~$\nu$.
We now choose the hyperedges of the third layer in a way that for each hyperedge the $\beta$ values of all its vertices are distinct.
The key insight is that this is possible!
Indeed, the following lemma says that this can be done iff $\nu$ dominates~$\varrho$.
\begin{lemma}
The following two statements are equivalent:
\begin{itemize}
\item In $\Delta_n$ there exists a partition of the vertex set into hyperedges of sizes given by the column lengths of $\nu$ such that
for each hyperedge the $\beta$ values of all vertices in the hyperedge are distinct.
\item $\nu$ dominates $\varrho$.
\end{itemize}
\end{lemma}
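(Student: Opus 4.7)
My plan is to recast the first condition as a $0/1$ matrix existence question with prescribed row and column sums, and then invoke the Gale--Ryser theorem.

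First, a direct enumeration shows that for each $k\in\{1,\ldots,n\}$ the number of vertices $x\in\Delta_n$ with $\beta(x)=k$ equals $n+1-k$, since this counts pairs $(r,c)$ with $r,c\geq 1$ and $r+c=n+2-k$. Hence the multiset of $\beta$-values on $\Delta_n$ is precisely the partition $\varrho=(n,n-1,\ldots,1)$. Next I would encode any candidate hyperedge decomposition as a $0/1$ matrix $M$ whose rows are indexed by the $\beta$-values $1,\ldots,n$ and whose columns are indexed by the hyperedges of the third layer, by letting $M_{k,j}=1$ iff the $j$th hyperedge contains a vertex of $\beta$-value $k$. The distinctness of $\beta$-values within each hyperedge is exactly the condition that all entries of $M$ lie in $\{0,1\}$; the column sums of $M$ are then the prescribed hyperedge sizes $\transpose\nu_j$, and the requirement that the hyperedges partition $\Delta_n$ becomes the requirement that the row sums equal $\varrho_k = n+1-k$. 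Conversely, any such $M$ can be realized as an actual partition by assigning the $n+1-k$ vertices of $\beta$-value $k$ arbitrarily among those hyperedges in which row $k$ has a $1$.

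The problem is thereby equivalent to the existence of a $0/1$ matrix with row sums $\varrho$ and column sums $\transpose\nu$. By the Gale--Ryser theorem such a matrix exists iff $\transpose\nu$ is dominated by $\transpose\varrho$; since $\varrho$ is self-conjugate and conjugation reverses the dominance order on partitions of the same size, this is equivalent to $\nu$ dominating $\varrho$, as required. The main obstacle in this plan is really just recognizing the Gale--Ryser setup: once the identification is made the rest is a routine translation, and in particular no combinatorial construction on $\Delta_n$ itself is needed beyond the $\beta$-value count.
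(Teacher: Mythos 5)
Your proof is correct and follows essentially the same route as the paper: both translate the hyperedge condition into an incidence structure built from the $\beta$-values (the paper phrases it as a column-distinct filling of $\nu$ with content $\varrho$; you phrase it as the $0/1$ incidence matrix with row sums $\varrho$ and column sums $\transpose\nu$, which is the same object encoded differently) and then invoke Gale--Ryser. The only genuine difference is which form of Gale--Ryser is cited: the paper uses the Kostka-number version (semistandard fillings of $\nu$ with content $\varrho$ exist iff $\nu\succeq\varrho$) and therefore inserts an intermediate ``straightening'' step to pass from arbitrary column-distinct fillings to semistandard ones, whereas you apply the $0/1$-matrix version directly and dispense with the straightening, paying instead with a short conjugation argument ($\transpose\nu\preceq\transpose\varrho=\varrho$ iff $\nu\succeq\varrho$ since transposition reverses dominance). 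Both versions are standard, and your conjugation step is carried out correctly, so this is a legitimate minor streamlining rather than a different method.
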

\begin{proof}
A \emph{filling} of shape $\nu$ and content $\gamma \in \IN^n$ is an assignment of numbers to the boxes of $\nu$
such that each entry $i$ appears exactly $\gamma_i$ times.
A filling is \emph{semistandard} if the entries are increasing along each column and nondecreasing along each row.
We prove that the following four statements are all equivalent:
\begin{enumerate}[(1)]
\item In $\Delta_n$ there exists a partition of the vertex set into hyperedges of sizes given by the column lengths of $\nu$ such that
for each hyperedge the $\beta$ values of all vertices in the hyperedge are distinct.
\item There exists a filling of $\nu$ with content $\varrho$ such that there is no column with two coinciding entries.
\item There exists a semistandard filling of $\nu$ with content $\varrho$.
\item $\nu$ dominates $\varrho$.
\end{enumerate}
The statement that (3) is equivalent to (4) is known as the Gale-Ryser theorem, see e.g.\ \cite[p.~457, Ex.~A.11]{FH:91}, \cite[p.~26, Ex.~2]{fult:97}, or~\cite[I.7 Exa.~9]{MaD:95}.
Clearly (3) implies (2). But (2) also implies (3) by straightening the filling, see e.g.\ \cite[p.~110]{fult:97}.
It remains to show that (1) iff (2).
From a partition of the vertex set into hyperedges we obtain a filling of shape $\nu$ by constructing for each hyperedge a column
whose entries are exactly the $\beta$ values of the vertices in the hyperedge.
On the other hand, from a filling we get a partition of the vertex set into hyperedges by constructing
for each column a hyperedge that has vertices whose $\beta$ values are exactly the values in the column.
For example, for $\nu=(5,3,1,1)$ we can find a filling
\[
\young(11112,223,3,4)
\]
from which we can construct the third layer such that the $\beta$ values of the vertices in a hyperedge are exactly the numbers appearing in a column:
\[
\begin{tikzpicture}[scale=0.7]
\node at (1,-1) {4};
\node at (2,-1) {3};
\node at (3,-1) {2};
\node at (4,-1) {1};
\node at (1,-2) {3};
\node at (2,-2) {2};
\node at (3,-2) {1};
\node at (1,-3) {2};
\node at (2,-3) {1};
\node at (1,-4) {1};
\draw[thick,dotted] (1.0,-0.5) .. controls (0.850000,-0.49999) and (0.7,-1.5) .. (0.7,-2.5) .. controls (0.7,-3.5) and (0.850000,-4.5) .. (1.0,-4.5) .. controls (1.15,-4.5) and (1.3,-3.5) .. (1.3,-2.5) .. controls (1.3,-1.5) and (1.15,-0.50000) .. (1.0,-0.5) -- cycle;
\draw[thick,dotted] (2.0,-1.7) .. controls (1.85,-1.70000) and (1.699999,-2.09999) .. (1.7,-2.5) .. controls (1.700000,-2.90000) and (1.850000,-3.30000) .. (2.0,-3.3) .. controls (2.149999,-3.29999) and (2.3,-2.9) .. (2.3,-2.5) .. controls (2.3,-2.1) and (2.15,-1.69999) .. (2.0,-1.7) -- cycle;
\draw[thick,dotted] (1.7,-1.0) .. controls (1.45,-0.6) and (1.599999,-0.44999) .. (2.0,-0.7) .. controls (2.400000,-0.95) and (3.05,-1.6) .. (3.3,-2.0) .. controls (3.55,-2.4) and (3.4,-2.55) .. (3.0,-2.3) .. controls (2.6,-2.05) and (1.95,-1.4) .. (1.7,-1.0) -- cycle;
\draw[thick,dotted] (3,-1) circle (0.3cm);
\draw[thick,dotted] (4,-1) circle (0.3cm);
\end{tikzpicture}
\]
\end{proof}

\section{Contraction with the Symmetric Tensor}
Let $e_1,e_2,\ldots,e_n$ denote the standard basis of $\IC^n$.
Choose generic vectors $c_1, \ldots, c_n \in \IC^n$.
Let
\[
\bar v := (e_1 \otimes e_1 \otimes c_1)^{\otimes n} \otimes (e_2 \otimes e_2 \otimes c_2)^{\otimes n-1} \otimes \cdots \otimes (e_n \otimes e_n \otimes c_n)^{\otimes 1} \in \tensor^d(\tensor^3 \IC^n).
\]
Define $v := \sum_{\sigma \in \aS_d} \sigma(\bar v) \in \Sym^d(\tensor^3\IC^n)$.
As described at the end of section~\ref{sec:newinterpretationofkron} it remains to show that $\widehat{\la,\mu,\nu}_H \circ v \neq 0$.
By linearity we have
\begin{equation}\label{eq:perm}
\widehat{\la,\mu,\nu}_H \circ v = \sum_{\sigma \in \aS_d} \widehat{\la,\mu,\nu}_H \circ \sigma(\bar v).
\end{equation}
Note that that stabilizer of $\bar v$ in $\aS_d$ is the Young subgroup $\aS_n \times \aS_{n-1} \times \cdots \times \aS_1 \subseteq \aS_d$,
so actually \eqref{eq:perm} is a sum of $d!/(n! (n-1)! \cdots 2!)$ summands, each with coefficient $\theta := n! (n-1)! \cdots 2!$.
Let $M$ denote the set of all mappings $\tau:\{1,\ldots,d\}\to\{1,\ldots,n\}$ such that the cardinality $|\tau^{-1}(i)|$ of the preimage of $i$ is $n+1-i$ for all $1 \leq i \leq n$.
Then we can rewrite \eqref{eq:perm} as
\begin{equation}\label{eq:permmaps}
\widehat{\la,\mu,\nu}_H \circ v = \theta \sum_{\tau \in M} \widehat{\la,\mu,\nu}_H \circ (e_{\tau(1)} \otimes e_{\tau(1)} \otimes c_{\tau(1)}) \otimes \cdots \otimes (e_{\tau(d)} \otimes e_{\tau(d)} \otimes c_{\tau(d)}).
\end{equation}
The map $\tau$ can be thought of as placing numbers $1$ up to $n$ on the vertices of $\Delta_n$, each number $i$ exactly $n+1-i$ times.
The key observation we want to prove is that there is exactly one nonzero summand in \eqref{eq:permmaps}, namely the one where $\tau=\beta$.
We now give strong restrictions on how $\tau$ can look like in the case where the summand corresponding to $\tau$ in \eqref{eq:permmaps} is nonzero.
The main argument we use is that for every hyperedge $\{x_1,x_2,\ldots,x_k\}$ the evaluation
$\sdet(e_{\tau(x_1)}, e_{\tau(x_2)}, \ldots, e_{\tau(x_k)})$ is nonzero iff the list $(\tau(x_1),\ldots,\tau(x_k))$ is a permutation
of $(1,2,\ldots,k)$. We refer to this fact as $(\ast)$.
The fact that there is only a single vertex $x$ in the bottom row implies that $\tau(x)=1$ by $(\ast)$,
because otherwise this singleton hyperedge in the row contributes a zero factor in the evaluation of the first layer.
For the vertex $y$ directly above $x$ by applying $(\ast)$ we see that
we cannot set $\tau(y)=1$, because the first column is a hyperedge in the second layer.
But from $(\ast)$ we know that in the row of $y$ the map $\tau$ has to place exactly the numbers $1$ and $2$,
so we must set $\tau(y)=2$ and $\tau(y')=1$ for the right neighbor $y'$ of~$y$.
This argument continues up through all rows until we see that at any vertex $x$ we can only place $\tau(x)=\beta(x)$.
The determinants of all hyperedges of the first and second layer are determinants of permutation matrices, so they are either $1$ or $-1$, but certainly nonzero.
Since the hyperedges in the third layer have the property that no hyperedge has two vertices with the same $\beta$ value,
and since the $c_i$ were chosen generically, the determinants of the hyperedges of the third layer are all nonzero.
This finishes the proof of Theorem~\ref{thm:main}.

\section{Hooks}\label{sec:hooks}
In this section we use Theorem~\ref{thm:main} to prove the Saxl conjecture for hooks, see Corollary~\ref{cor:saxlhooks}.
Interestingly, the proof of Corollary~\ref{cor:saxlhooks} in \cite{ppv:13} uses a very different technique.

Let $n \times 1$ denote the partition with $n$ boxes in a single column
and let $1 \times n$ denote the partition with $n$ boxes in a single row.
The addition of partitions is defined as the addition of their parts.
A partition $\nu$ is called a \emph{hook} if $\nu$ can be written as $\nu=1 \times n + m\times 1$ for some $n,m \in \IN_{\geq 0}$.
\begin{corollary}\label{cor:saxlhooks}
Let $d := n(n+1)/2$.
For every hook $\nu$ with $d$ boxes we have $\kronk{\varrho(n)}{\varrho(n)}{\nu}>0$.
\end{corollary}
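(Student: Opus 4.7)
My plan is to prove the corollary by induction on $n$, using Theorem~\ref{thm:main} as the base-level input and a multiplicative (semigroup) property of highest weight vectors to bootstrap from smaller staircases. The base case $n=1$ is immediate, since the only hook is $(1)=\varrho(1)$ and $\kronk{(1)}{(1)}{(1)}=1$.

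For the inductive step, fix $n\geq 2$ and a hook $\nu=(a,1^{d-a})$ of size $d=n(n+1)/2$, and split into two cases on $a$. If $a\leq n$, a direct inspection of partial sums shows that $\nu$ is dominated by $\varrho(n)$ in the dominance order, so Theorem~\ref{thm:main} already yields $\kronk{\varrho(n)}{\varrho(n)}{\nu}>0$. If $a\geq n+1$, set $\nu':=(a-n,1^{d-a})$, which is a hook with $|\nu'|=(n-1)n/2=|\varrho(n-1)|$, and observe the elementary partition identities $\varrho(n-1)+(1^n)=\varrho(n)$ (padding $\varrho(n-1)$ with a trailing zero) and $\nu'+(n)=\nu$ (adding $n$ only to the first part). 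By the inductive hypothesis applied to $\nu'$, $\kronk{\varrho(n-1)}{\varrho(n-1)}{\nu'}>0$; and since $\Specht{(1^n)}\otimes\Specht{(1^n)}\cong\Specht{(n)}$ (sign squared is trivial), also $\kronk{(1^n)}{(1^n)}{(n)}=1$.

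These two positivities combine via the additive semigroup property of Kronecker coefficients, which is transparent in the $\HWV$-polynomial picture of Section~\ref{sec:newinterpretationofkron}. Identifying $\HWV$'s with homogeneous polynomial invariants on $(\tensor^3\IC^N)^*$ (with $N$ large enough to accommodate the row lengths of $\nu$), multiplication of two such polynomials simply adds their three weights. Choosing nonzero $f\in\HWV_{\varrho(n-1),\varrho(n-1),\nu'}$ and $g\in\HWV_{(1^n),(1^n),(n)}$, the product $fg$ is a nonzero element of $\HWV_{\varrho(n),\varrho(n),\nu}$ --- nonzero because polynomial rings are integral domains --- and this yields $\kronk{\varrho(n)}{\varrho(n)}{\nu}>0$. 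Equivalently, in the combinatorial language of the paper, one forms the disjoint union $H:=H_1\sqcup H_2$ of Young hypergraphs realizing $(\varrho(n-1),\varrho(n-1),\nu')$ and $((1^n),(1^n),(n))$ with nonvanishing $\widehat{H_1},\widehat{H_2}$, checks that the hyperedges of $H$ in each of the three layers assemble to give the column lengths of $\varrho(n),\varrho(n),\nu$, and concludes $\widehat{H}=\widehat{H_1}\cdot\widehat{H_2}\neq 0$.

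The step I expect to be the main obstacle is the finicky bookkeeping --- verifying that the disjoint union $H_1\sqcup H_2$ really does recover the column lengths of the target partitions layer by layer, and stating the HWV-multiplication/semigroup fact cleanly enough for the recursion on $n$ to run. Once this is in hand, the induction is immediate, and the same bootstrap explains the remark in the introduction that Corollary~\ref{cor:saxlhooks} can also be derived from \cite[Thm.~4.12]{ppv:13}: one only needs enough base-level positivity to cover the hooks at the bottom of the recursion.
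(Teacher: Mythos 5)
Your proof is correct and follows essentially the same route as the paper: the same case split on whether the arm length $a$ exceeds $n$, the same use of Theorem~\ref{thm:main} in the small-arm case, the same peeling of a row of length $n$ from $\nu$ and a column of height $n$ from $\varrho(n)$ in the large-arm case, and the same application of the semigroup property (which the paper cites rather than re-derives, but your HWV-product justification is exactly the standard argument behind it). The only cosmetic difference is that the paper phrases the induction on $d$ instead of on $n$, which is equivalent.
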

\begin{proof}
We use induction on $d$, where the base case $d=1$ is trivial.
If $\nu$ has at most $n$ columns, then the statement holds by Theorem~\ref{thm:main} because $\nu$ is dominated by $\varrho(n)$.
If $\nu$ has more than $n$ columns, then
we can obtain a partition $\bar\nu$ by removing $n$ boxes from the first row of $\nu$, so $1 \times n + \bar\nu = \nu$.
By induction hypothesis
$\kronk{\varrho(n-1)}{\varrho(n-1)}{\bar\nu}>0$.
Since $\kronk{n \times 1}{n \times 1}{1 \times n}=1 > 0$ and $n \times 1 + \varrho(n-1) = \varrho(n)$,
the semigroup property (see \cite[Thm.~3.1]{chm:05} or \cite[Prop.~4.4.10]{ike:12b} for a different viewpoint) implies
$\kronk{\varrho(n)}{\varrho(n)}{\nu} > 0$.
\end{proof}

\bibliographystyle{alpha}

\end{document}